\font\smallit=cmti10
\font\smalltt=cmtt10
\renewcommand\section{\@startsection {section}{1}{\z@}
{-30pt \@plus -1ex \@minus -.2ex}
{2.3ex \@plus.2ex}
{\normalfont\normalsize\bfseries\boldmath}}
\renewcommand\subsection{\@startsection{subsection}{2}{\z@}
{-3.25ex\@plus -1ex \@minus -.2ex}
{1.5ex \@plus .2ex}
{\normalfont\normalsize\bfseries\boldmath}}
\renewcommand{\@seccntformat}[1]{\csname the#1\endcsname. }
\newtheorem{theorem}{Theorem}
\newtheorem{lemma}{Lemma}
\theoremstyle{definition}
\newtheorem{definition}{Definition}[section]
\begin{document}

\begin{center}
\uppercase{\bf Two Dimensional Silver Dollar Game}
\vskip 20pt
{\bf Ryohei Miyadera }\\
{\smallit Keimei Gakuin Junior and High School, Kobe City, Japan}. \\
{\tt runnerskg@gmail.com}
\vskip 20pt
{\bf Enchong  Li  }\\
{\smallit Keimei Gakuin Junior and High School, Kobe City, Japan}. \\
{\tt   }
\vskip 10pt
{\bf Akito Tsujii}. \\
{\smallit Keimei Gakuin Junior and High School, Kobe City, Japan}. \\
{\tt   }

\vskip 10pt

\end{center}
\vskip 20pt
\vskip 30pt

\pagestyle{myheadings} 
\markright{\smalltt \hfill} 
\thispagestyle{empty} 
\baselineskip=12.875pt 
\vskip 30pt

\centerline{\bf Abstract}
\noindent
We define a variant of the two-dimensional Silver Dollar game. Two coins are placed on a chessboard of unbounded size, and two players take turns choosing one of the coins and moving it. Coins are to be moved to the left or upward vertically as far as desired.
If a coin is dropped off the board, players cannot use this coin. Jumping a coin over another coin or on another coin is illegal. 
We add another operation;that is to move a coin and push another coin. 
For non-negative integers $w,x,y,z$, by $(w,x,y,z)$ we denote the positions of the two coins, where $(w,x)$ is the position of one coin and $(y,z)$ is the position of the other coin.
the set of $\mathcal{P}$-positions of this game is 
$\{(w,x,y,z):(w-1)\oplus (x-1) \oplus (y-1) \oplus (z-1)\}$.
Next, we make another game by omitting the rule of pushing another coin and 
permitting a jump over another coin. Then, there exist two set $A$ and $B$ such that 
the set of $\mathcal{P}$-positions of this game is 
$\{(w,x,y,z):(w-1)\oplus (x-1) \oplus (y-1) \oplus (z-1)\}$ $\cup A$ $-B$.

\pagestyle{myheadings}
\markright{\smalltt \hfill}
\baselineskip=12.875pt
\vskip 30pt
\section{Introduction and Combinatorial Game Theory Definitions}\label{introductionsection}
Let $\mathbb{Z}_{\geq0}$ and $\mathbb{N}$ be the sets of nonnegative and natural numbers, respectively.

We briefly review some of the necessary concepts of combinatorial game theory by referring to $\cite{lesson}$. 
We study games that are impartial games without drawings; only two outcome classes are possible.
\begin{definition}\label{NPpositions}
$(a)$ A position is referred to as a $\mathcal{P}$-\textit{position} if it is a winning position for the previous player (the player who has just moved), as long as they play correctly at each stage.\\
$(b)$ A position is referred to as an $\mathcal{N}$-\textit{position} if it is the winning position for the next player, as long as they play correctly at each stage.\\
$(c)$ The \textit{disjunctive sum} of the two games, denoted by $\mathbf{G}+\mathbf{H}$, is a super game in which a player may move either in $\mathbf{G}$ or $\mathbf{H}$ but not in both.\\
$(d)$ For any position $\mathbf{p}$ in game $\mathbf{G}$, a set of positions can be reached by one move in $\mathbf{G}$, which we denote as \textit{move}$(\mathbf{p})$. \\
$(e)$  The \textit{minimum excluded value} ($\textit{mex}$) of a set $S$ of nonnegative integers is the least nonnegative integer that is not in S. \\
$(f)$ Let $\mathbf{p}$ be the position in the impartial game. The associated \textit{Grundy number} is denoted by $G(\mathbf{p})$ and is 
 recursively defined by 
	$G(\mathbf{p}) = \textit{mex}\{G(\mathbf{h}): \mathbf{h} \in move(\mathbf{p})\}.$
\end{definition}

\begin{theorem}\label{theoremofsumg}
Let $\mathbf{G}$ and $\mathbf{H}$ be impartial rulesets, and $G_{\mathbf{G}}$ and $G_{\mathbf{H}}$ be the Grundy numbers of game $\mathbf{g}$ played under the rules of $\mathbf{G}$ and game $\mathbf{h}$ played under those of $\mathbf{H}$. Thus, we obtain the following:\\
	$(i)$ For any position $\mathbf{g}$ in $\mathbf{G}$, 
	$G_{\mathbf{G}}(\mathbf{g})=0$ if and only if $\mathbf{g}$ is the $\mathcal{P}$-position.\\
	$(ii)$ The Grundy number of positions $\{\mathbf{g},\mathbf{h}\}$ in game $\mathbf{G}+\mathbf{H}$ is
	$G_{\mathbf{G}}(\mathbf{g})\oplus G_{\mathbf{H}}(\mathbf{h})$.
\end{theorem}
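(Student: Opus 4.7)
The plan is to establish both parts by well-founded (strong) induction on positions, exploiting the fact that an impartial game without drawings has an acyclic position graph so that any induction walks down to a terminal position in finitely many steps. For part $(i)$, the base case is a terminal position, whose Grundy number is $\textit{mex}\,\emptyset=0$ and which is trivially a $\mathcal{P}$-position since the next player has no legal move. For the inductive step, assume the equivalence holds at every option of $\mathbf{g}$. If $G_{\mathbf{G}}(\mathbf{g})=0$, then no option has Grundy value $0$, so by induction every option is an $\mathcal{N}$-position, forcing $\mathbf{g}$ itself to be a $\mathcal{P}$-position. Conversely, if $G_{\mathbf{G}}(\mathbf{g})>0$, then by the definition of $\textit{mex}$ some option has Grundy value $0$, which by induction is a $\mathcal{P}$-position, so $\mathbf{g}$ is an $\mathcal{N}$-position.

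For part $(ii)$, I would set $n=G_{\mathbf{G}}(\mathbf{g})\oplus G_{\mathbf{H}}(\mathbf{h})$ and again induct on the pair $(\mathbf{g},\mathbf{h})$ in the position graph of $\mathbf{G}+\mathbf{H}$. The inductive hypothesis gives that every option of $(\mathbf{g},\mathbf{h})$---having the form $(\mathbf{g}',\mathbf{h})$ for $\mathbf{g}'\in \mathit{move}(\mathbf{g})$ or $(\mathbf{g},\mathbf{h}')$ for $\mathbf{h}'\in \mathit{move}(\mathbf{h})$---already satisfies the theorem, so its combined Grundy value is $G_{\mathbf{G}}(\mathbf{g}')\oplus G_{\mathbf{H}}(\mathbf{h})$ or $G_{\mathbf{G}}(\mathbf{g})\oplus G_{\mathbf{H}}(\mathbf{h}')$. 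To conclude that the mex of these option values equals $n$ I need two facts: $(a)$ no option attains Grundy value $n$, and $(b)$ every non-negative integer $k<n$ is attained by some option. Fact $(a)$ is cheap: $G_{\mathbf{G}}(\mathbf{g}')\oplus G_{\mathbf{H}}(\mathbf{h})=n$ would force $G_{\mathbf{G}}(\mathbf{g}')=G_{\mathbf{G}}(\mathbf{g})$, contradicting the mex that defines $G_{\mathbf{G}}(\mathbf{g})$, and the other sub-case is symmetric.

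The hard part will be fact $(b)$, which is the point at which the algebraic structure of $\oplus$ really enters the proof. Given $k<n$, set $d=k\oplus n$ and look at the leading bit on which $k$ and $n$ differ; since $k<n$, that bit is present in $n$, hence in exactly one of $G_{\mathbf{G}}(\mathbf{g})$ and $G_{\mathbf{H}}(\mathbf{h})$. XORing $d$ into that component strictly decreases it, so by the mex that defines the corresponding component Grundy value there is a legal move in that summand to a position whose Grundy number equals this smaller XOR, and the combined Grundy value of the resulting option of $(\mathbf{g},\mathbf{h})$ then works out to exactly $k$. Once this bit-surgery is in hand the rest of the induction is bookkeeping; the only nontrivial ingredient is the elementary nimber lemma ``if $k<n$ then the leading bit of $k\oplus n$ is a bit of $n$,'' which I would state and prove as a short separate claim before carrying out part $(ii)$.
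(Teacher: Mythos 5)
Your proof is correct: part $(i)$ is the standard induction showing $G_{\mathbf{G}}(\mathbf{g})=0$ exactly at $\mathcal{P}$-positions, and part $(ii)$ is the standard Sprague--Grundy argument, with the two key facts (no option of $(\mathbf{g},\mathbf{h})$ has value $n=G_{\mathbf{G}}(\mathbf{g})\oplus G_{\mathbf{H}}(\mathbf{h})$, and every $k<n$ is realized via the leading-bit lemma for $\oplus$) handled properly. The paper itself supplies no proof of this theorem---it is quoted as background from the cited textbook---so there is nothing to compare against; your argument is precisely the classical one found there, and it is complete.
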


\subsection{Variants of Silver Dollar Game with a Push}
\begin{definition}\label{twocoinwithpush}
We define a variant of the two-dimensional Silver Dollar game. Two coins are placed on a chessboard of unbounded size, and two players take turns choosing one of the coins and moving it. Coins are to be moved to the left or upward vertically as far as desired. If a coin is dropped off the board, players cannot use this coin. Jumping a coin over another coin is illegal, but players can move a coin and push another coin as in Figures \ref{twodropv} and \ref{twodroph}.\\
\end{definition}
By $(w,x,y,z)$ we denote the positions of the two coins, where $(w,x)$ is the position of one coin and $(y,z)$ is the position of the other coin for 
any $w,x,y,z \in Z_{\geq 0}$.

\begin{figure}[H]
\begin{tabular}{ccc}
\begin{minipage}[t]{0.33\textwidth}
\begin{center}
\includegraphics[height=2.5cm]{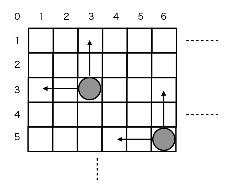}
\caption{}
\label{twodrop}
\end{center}
\end{minipage}
\begin{minipage}[t]{0.33\textwidth}
\begin{center}
 \includegraphics[height=2.5cm]{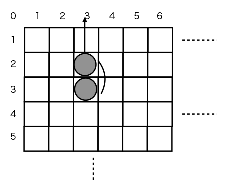}
\caption{}
\label{twodropv}
\end{center}
\end{minipage}
\begin{minipage}[t]{0.33\textwidth}
\begin{center}
\includegraphics[height=2.5cm]{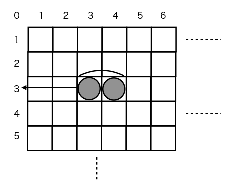}
\caption{}
\label{twodroph}
\end{center}
\end{minipage}
\end{tabular}
\end{figure}

\begin{definition}\label{sumoftwocoin}
We define another game. Two coins are placed on a chessboard of unbounded size, and two players take turns choosing one of the coins and moving it. Coins are to be moved to the left or upward vertically as far as desired. If a coin is dropped off the board, players cannot use this coin. Jumping on or over another coin is permitted.
\end{definition}

\begin{theorem}
In the game of Definition \ref{twocoinwithpush}, 
the set of $\mathcal{P}$-position is $\{(w,x,y,z):(w-1)\oplus (x-1) \oplus (y-1) \oplus (z-1) = 0\}$.
\end{theorem}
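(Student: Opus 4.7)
The plan is to apply the standard two-step P-position argument. Let $\mathcal{P}$ consist of the terminal (both coins off the board, with no legal moves) together with every two-coin configuration satisfying $(w-1)\oplus(x-1)\oplus(y-1)\oplus(z-1)=0$. I will verify (i) every move from $\mathcal{P}$ lands outside $\mathcal{P}$, and (ii) from every position outside $\mathcal{P}$ some move enters $\mathcal{P}$. As a preliminary, every one-coin position is an N-position, since the mover can drop the remaining coin off the board and thereby reach the terminal, which lies in $\mathcal{P}$.

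A key structural fact is that any two-coin position in $\mathcal{P}$ automatically has $x\neq z$ and $w\neq y$: indeed $x=z$ would force $(w-1)\oplus(y-1)=0$, i.e.\ $w=y$, so the two coins would share a square, which is impossible. Consequently no pushing move originates from a member of $\mathcal{P}$, and every legal move from such a position is either a simple leftward or upward move of one coin---which changes exactly one of the four Nim coordinates, making the XOR nonzero and yielding a two-coin non-member of $\mathcal{P}$---or a direct drop of one coin, yielding a one-coin N-position. This establishes (i).

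For (ii), I split a two-coin position with nonzero XOR $X$ into three cases. In the \emph{generic} case $x\neq z$ and $w\neq y$, I apply the classical Nim reduction: pick one of the four Nim coordinates whose bit at the highest set bit of $X$ equals $1$ and reduce it to its XOR with $X$; the corresponding left/up move of the appropriate coin is unblocked by the case hypothesis, and a short contradiction with the structural fact above rules out accidentally placing the moving coin on top of the other. In the \emph{same-row} case $x=z$ (forcing $w\neq y$; WLOG $y<w$), I execute the sweep: slide coin 1 leftward by $w$ squares, so that the push rule carries coin 2 off the board at step $w-1$ and coin 1 itself drops off at step $w$, arriving at the terminal in $\mathcal{P}$. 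The \emph{same-column} case is symmetric via an upward sweep. The main technical obstacle I anticipate is justifying this sweep as a single legal move---that is, reading ``as far as desired'' together with the push rule to permit a single move that first pushes one coin off and then drops the moving coin past the vacated square; once this is granted, the two collinear cases reach the terminal at once, and the generic case reduces to the brief non-collision check already indicated.
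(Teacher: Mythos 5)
Your argument is correct, but there is essentially nothing in the paper to compare it with: the paper's entire proof of this theorem is the remark that it is ``almost the same as that for traditional four-pile nim,'' and it is omitted. Your write-up supplies precisely the content that remark glosses over, and it makes visible that the reduction to four-pile Nim is \emph{not} immediate. Two of your steps go genuinely beyond plain Nim: first, the structural observation that a two-coin position with zero XOR can never have the coins in a common row or column, which is what guarantees both that no push is available from a candidate $\mathcal{P}$-position and that the Nim-style reply in the generic case is never blocked by the other coin; second, the collinear case, where the classical Nim move is unavailable (the required destination square is occupied or lies in the wrong direction) and one must instead invoke the push rule to sweep both coins off the board in a single move, landing on the terminal position. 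On that last point your worry about legality is reasonable but resolvable: the single move that pushes the front coin off the board while the moving coin also drops off is not only what Figures \ref{twodropv} and \ref{twodroph} depict, it is forced by the statement itself --- with coordinates starting at $1$, the position with coins at $(1,1)$ and $(1,2)$ has XOR equal to $1$, and every move other than the double drop leaves exactly one coin on the board (an $\mathcal{N}$-position), so the theorem would be false if the sweep were illegal. The one repair I would ask for is to state the indexing convention explicitly: the expression $(w-1)\oplus (x-1)\oplus (y-1)\oplus (z-1)$ only makes sense for $w,x,y,z\geq 1$, and your generic-case move needs the new coordinate, whose shifted value is $(w-1)\oplus X$, to be an on-board square, which holds exactly under that convention.
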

\begin{proof}
The proof is almost the same as that for traditional four-pile nim, and we omit it.
\end{proof}

\subsection{Two-Dimensional Silver Dollar Game without a Push}

\begin{definition}\label{twocoinnopush}
We define another game by omitting the rule of pushing two coins together and permitting the jump over another coin
in the game of Definition \ref{twocoinwithpush}.
\end{definition}

\begin{definition}\label{withoutpush}
$(a)$ Let $P_0=\{(w,x,y,z):(w-1)\oplus (x-1) \oplus (y-1) \oplus (z-1) = 0\}.$\\
$(b)$ Let $P_1= \{(w,2n-1,w,2n):w,n \in \mathbb{N}\}$ $\cup \{(w,2n,w,2n-1):w,n \in \mathbb{N}\}$
$\cup \{(2n-1,x,2n,x):x,n \in \mathbb{N}\}$ $\cup \{(2n,x,2n-1,x):x,n \in \mathbb{N}\}$.\\
$(c)$ Let $N_0=\{(2m,2n,2m-1,2n-1):m,n \in \mathbb{N}\}\cup \{(2m,2n-1,2m-1,2n):m,n \in \mathbb{N}\}\cup \{(2m-1,2n-1,2m,2n):m,n \in \mathbb{N}\}\cup \{(2m-1,2n,2m,2n-1):m,n \in \mathbb{N}\}$.
\end{definition}
Note that $N_0 \subset \{(w,x,y,z):(w-1)\oplus (x-1) \oplus (y-1) \oplus (z-1) = 0\}$ and $P_1 \subset \{(w,x,y,z):(w-1)\oplus (x-1) \oplus (y-1) \oplus (z-1) = 1\}$.

\begin{theorem}\label{gamewithoutpush}
The set of $\mathcal{P}$-position for the game of Definition \ref{withoutpush} is 
$P_0 \cup P_1-N_0.$
\end{theorem}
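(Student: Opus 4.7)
The plan is to prove that $S := (P_0 \setminus N_0) \cup P_1$ has the two defining properties of a $\mathcal{P}$-position set: every legal move from $S$ lands in $S^c$, and every position in $S^c$ admits a move into $S$. The union $S$ is disjoint because $N_0 \subset P_0$ and, by the remark following Definition \ref{withoutpush}, $P_1$ lies in $\{(w,x,y,z):(w-1)\oplus(x-1)\oplus(y-1)\oplus(z-1)=1\}$, so the two pieces can be analysed separately. Induction proceeds on $w+x+y+z$. A key structural fact is that in the no-push game every move decreases exactly one of the four coordinates, which makes a Nim-style xor analysis directly applicable.

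First, I will check that every legal move from a position in $P_1$ exits $S$. Taking $(u, 2n-1, u, 2n)$ as a representative of the four symmetric $P_1$ subfamilies, every single-coordinate decrease either lands on the other coin (illegal) or breaks the $P_1$ adjacency; using $(2k-2)\oplus(2k-1)=1$, a short calculation shows the xor of the target equals $0$ only when the target lies in $N_0$. Since $N_0 \subset S^c$ and no single-coordinate move can restore the $P_1$ adjacency after the xor changes parity, this case is settled.

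Next, for $(w,x,y,z) \in P_0 \setminus N_0$, the usual Nim argument shows that decreasing exactly one of $w-1, x-1, y-1, z-1$ cannot preserve the xor, so no move lands in $P_0$. The extra task is to rule out moves into $P_1$; I will show that if $(w,x,y,z) \in P_0$ admits a single-coordinate decrease into $P_1$, then $(w,x,y,z) \in N_0$, contradicting the hypothesis. For instance, if the first coin moves left to land in $(y, x, y, z)$ of the form $(u, 2n-1, u, 2n)$, the $P_0$ constraint together with $\{x,z\} = \{2n-1, 2n\}$ forces $(x-1)\oplus(z-1)=1$ and hence $(w-1)\oplus(y-1)=1$, which in turn forces $\{w, y\} = \{2j-1, 2j\}$ and places $(w,x,y,z)$ in the appropriate subfamily of $N_0$. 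The remaining seven move-into-$P_1$ types are handled symmetrically.

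Finally, for $S^c$ I split into $N_0$ and $\{\text{xor} \neq 0\}\setminus P_1$. From each of the four patterns in $N_0$ I exhibit an explicit move into $P_1$; for example, from $(2m, 2n, 2m-1, 2n-1)$, decrementing the second coordinate to $2n-1$ yields $(2m, 2n-1, 2m-1, 2n-1) \in P_1$, and the other three $N_0$ subfamilies are handled by analogous single-step moves. For a position with xor $\neq 0$ outside $P_1$, the standard Nim adjustment provides a coordinate-reducing move into $P_0$; I verify that some such move can be chosen to avoid both landing on the other coin and landing in $N_0$, falling back on a direct move into $P_1$ in the borderline cases where every Nim target happens to be in $N_0$. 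The main obstacle throughout is the case-by-case bookkeeping linking the $P_0$ xor constraint to the $N_0$ parity pattern via the $P_1$ adjacency; once that correspondence is made precise in one case, the remaining cases follow by the evident symmetries among the four subfamilies of $P_1$ and $N_0$.
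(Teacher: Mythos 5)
Your proposal takes a genuinely different route from the paper. The paper does not verify the characterization directly: it proves Lemma \ref{samemath} (moving a coin off the board while the other remains is an immediately losing move, so the game of Definition \ref{twocoinnopush} is equivalent to ``Corner the Two Rooks'' of Definition \ref{tworookjump}) and then simply invokes Theorem \ref{ppositionofrook}, which is imported without proof from \cite{journalofinf}. You instead give a self-contained closure argument for $S=(P_0\setminus N_0)\cup P_1$: no move from $S$ stays in $S$, and every position outside $S$ has a move into $S$. This buys a proof that can actually be read inside the paper, and your structural observations are the right ones --- in particular the fact that a move from $P_0$ into $P_1$ forces the starting position into $N_0$, and that xor-zeroing moves on the two different axes cannot both land in $N_0$ without forcing the original xor to vanish.

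Two things must still be supplied. First, ``every move decreases exactly one of the four coordinates'' silently ignores the move that drops a coin off the board, where $(w-1)\oplus\cdots$ no longer makes sense; you need the content of Lemma \ref{samemath} (a lone coin on the board is an $\mathcal{N}$-position, so off-board moves never go from $S$ to $S$ and are never needed to reach $S$). Second, the sentence ``I verify that some such move can be chosen \dots falling back on a direct move into $P_1$'' is the crux of the theorem and is only asserted. It is true, and the clean way to close it is: a xor-zeroing move on a first coordinate lands in $N_0$ exactly when $\{x,z\}$ is a pair $\{2n-1,2n\}$, and is blocked by the other coin exactly when $x=z$; symmetrically for second-coordinate moves and $\{w,y\}$. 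These failure conditions cannot hold on both axes simultaneously (that would force xor $=0$, coincident coins, or membership in $P_1$), and whenever the axis carrying the available Nim moves does fail, the failing condition itself ($\{x,z\}$ equal or an odd--even pair, say) guarantees $w\neq y$ and hence a legal move of the larger of $w,y$ onto, or parity-adjacent to, the smaller, landing in $P_1$. Without this case analysis the hardest third of the argument is missing.
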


We prove Theorem \ref{gamewithoutpush} by a Theorem in \cite{journalofinf}. As you learn in Subsection \ref{tworooks},
the game of Definition \ref{twocoinnopush} is mathematically the same as the game of Definition \ref{tworookjump}.
Note that one of the authors of the present article is one of the authors of \cite{journalofinf}.

\subsection{Corner the Two Rooks}\label{tworooks}
Here, we introduce the impartial game of ``Corner the Two Rooks," a variant of the classical game of ``Corner the Queen" which was studied in \cite{wythoffpaper}. 	
Instead of the queen used in Wythoff’s game of Nim, we use the two rooks of chess. This game can be considered as a two-dimensional Maya game.

Let us break with chess traditions here and denote fields on the chessboard by pairs of numbers. The field in the upper left corner is denoted by $(0,0)$ and the others are denoted according to a Cartesian scheme: field $(x, y)$ denotes $x$ fields to the right followed by $y$ fields downward.
\begin{definition}\label{tworookjump}   
$(i)$ We define ``Corner the Two Rooks." Two rooks are placed on a chessboard of unbounded size, and two players take turns choosing one of the rooks and moving it. Rooks are to be moved to the left or upward vertically as far as desired. A rook may jump over another rook but not onto another. The first player who cannot make a valid move loses. \\
$(ii)$ By $(x,y,z,w)$ we denote the positions of the two rooks, where $(x,y)$ is the position of one rook and $(z,w)$ is the position of the other rook for 
any $x,y,z,w \in Z_{\geq 0}$.\\
$(iii)$ Let $\mathcal{E} = \{(0,0,1,0), (0,0,0,1), (1,0,0,0), (0,1,0,0)\}$, which consists of terminal positions from which a rook cannot move to another position. See Figures \ref{p1examplep} and \ref{p1examplep2}.
\end{definition}

\begin{figure}[H]
\begin{minipage}[!htb]{0.48\columnwidth}
\includegraphics[width=0.8\columnwidth]{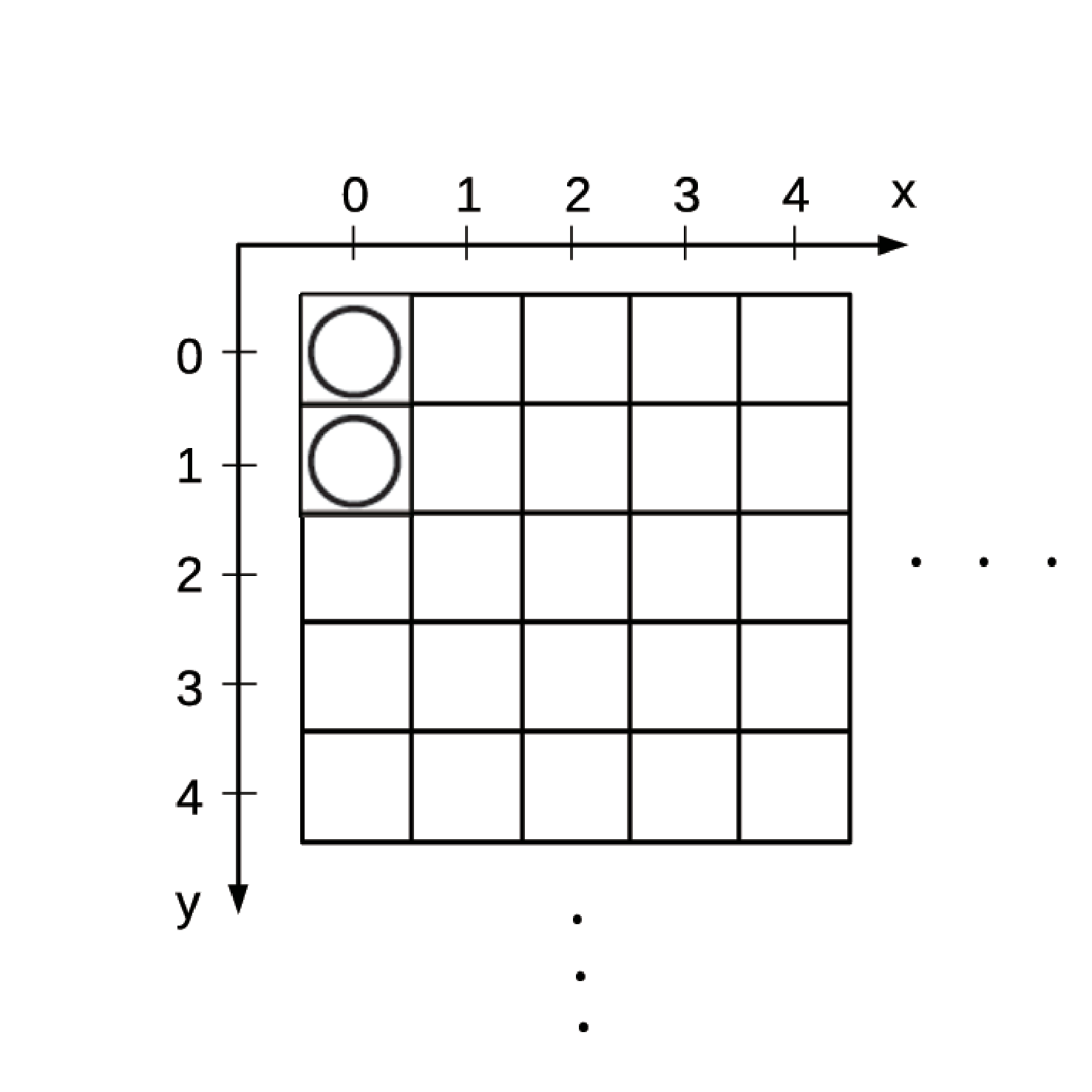}
\caption{Terminal Positions}
\label{p1examplep}	
\end{minipage}
\begin{minipage}[!htb]{0.48\columnwidth}
\includegraphics[width=0.8\columnwidth]{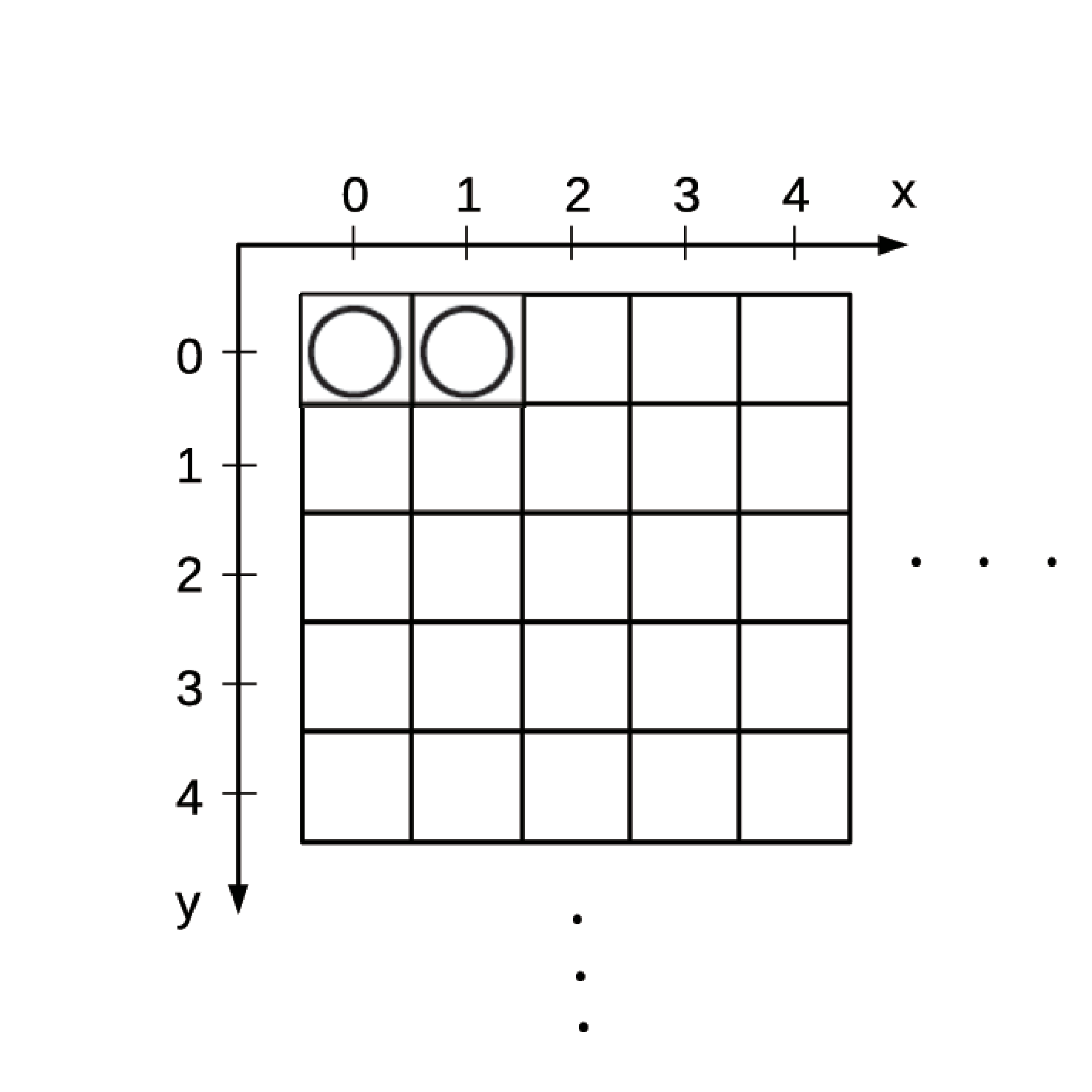}
\caption{Terminal Positions}
\label{p1examplep2}	
\end{minipage}
\end{figure}

\begin{definition}\label{ppositionset}
	For $x,y,z \in Z_{\ge 0}$, we let
\begin{flalign}
	\mathcal{P}_1 & =\{(2n,m,2n+1,m):n,m \in Z_{\geq 0} \}  \cup \{(2n+1,m,2n,m):n,m \in Z_{\geq 0} \}& \nonumber \\
	& \cup \{(n,2m+1, n,2m):n,m \in Z_{\geq 0} \}  \cup \{(n,2m,n,2m+1):n,m \in Z_{\geq 0} \},&  \nonumber  \\
\mathcal{N}_0 & = \{(2n,2m,2n+1,2m+1):n,m \in Z_{\geq 0} \nonumber \\
&  \cup \{(2n+1,2m+1,2n,2m):n,m \in Z_{\geq 0} \} \nonumber \\
& \cup \{(2n+1,2m, 2n,2m+1):n,m \in Z_{\geq 0} \} \nonumber \\
& \cup \{(2n,2m+1,2n+1,2m):n,m \in Z_{\geq 0} \},& \nonumber  \\
	\mathcal{P} &= (\{(x,y,z,w):x \oplus y \oplus z \oplus w =0, \text{ and } x,y,z,w \in Z_{\geq 0}\} \cup \mathcal{P}_1)-\mathcal{N}_0,&  \nonumber  \\
	\mathcal{N} & = \{(x,y,z,w):x \oplus y \oplus z \oplus w \neq 0  \text{ and } x,y,z,w \in Z_{\geq 0}\} \cup \mathcal{N}_0 - \mathcal{P}_1. &  \nonumber 
\end{flalign}
\end{definition}
\begin{figure}[H]
\begin{minipage}[!htb]{0.48\columnwidth}
\includegraphics[width=0.8\columnwidth]{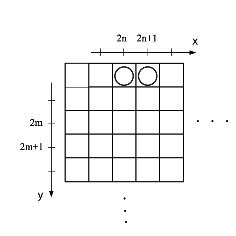}
\caption{An example of elements of $\mathcal{P}_1$}
\label{p1examplep3}	
\end{minipage}
\begin{minipage}[!htb]{0.48\columnwidth}
\includegraphics[width=0.8\columnwidth]{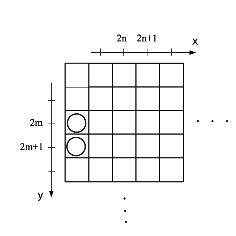}
\caption{An example of elements of $\mathcal{P}_1$}
\label{p1examplep4}	
\end{minipage}
\end{figure}

\begin{figure}[H]
\begin{minipage}[!htb]{0.48\columnwidth}
\includegraphics[width=0.8\columnwidth]{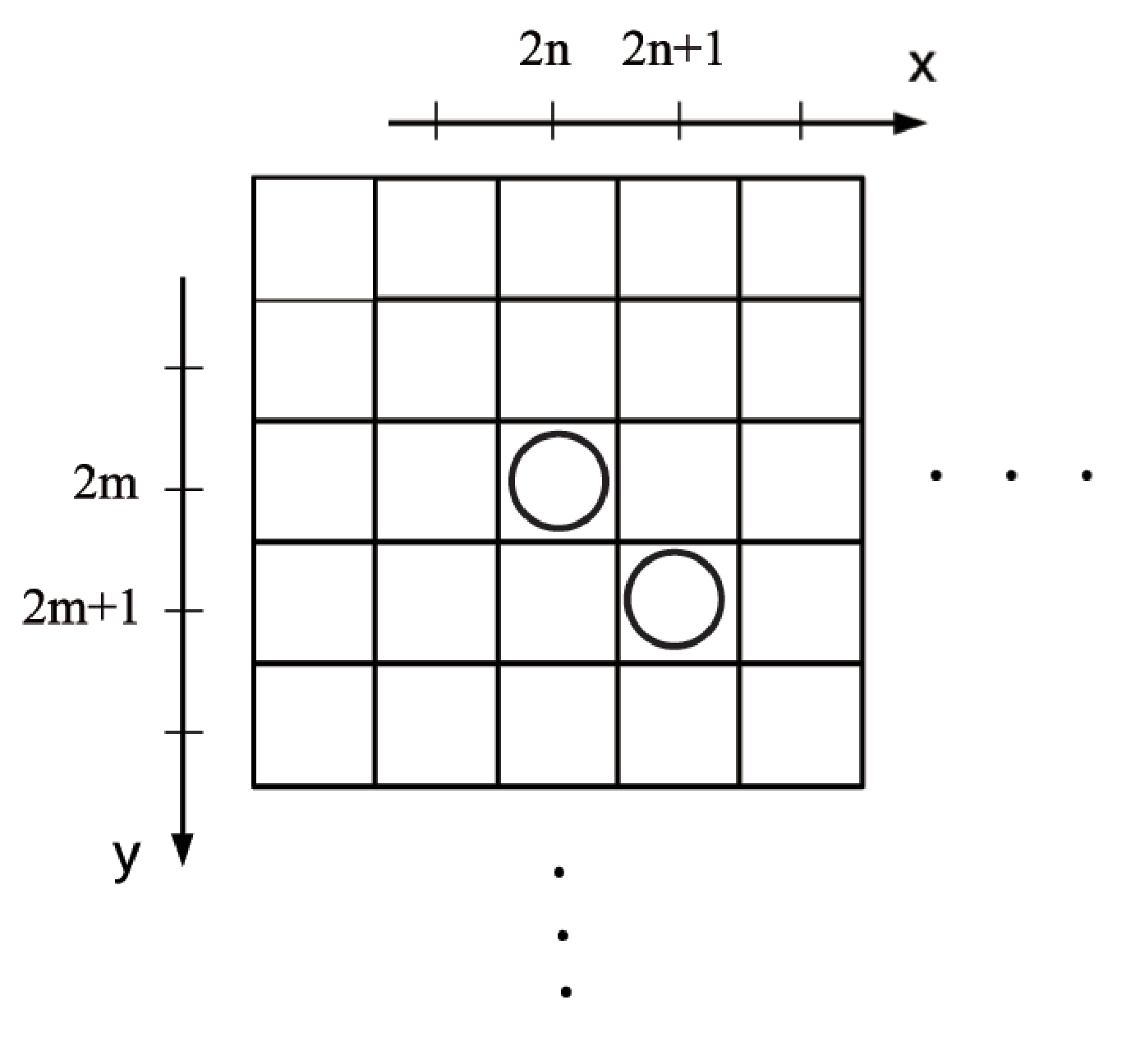}
\caption{An example of elements of $\mathcal{N}_0$}	
\label{n0examplep1}
\end{minipage}
\begin{minipage}[!htb]{0.48\columnwidth}
\includegraphics[width=0.8\columnwidth]{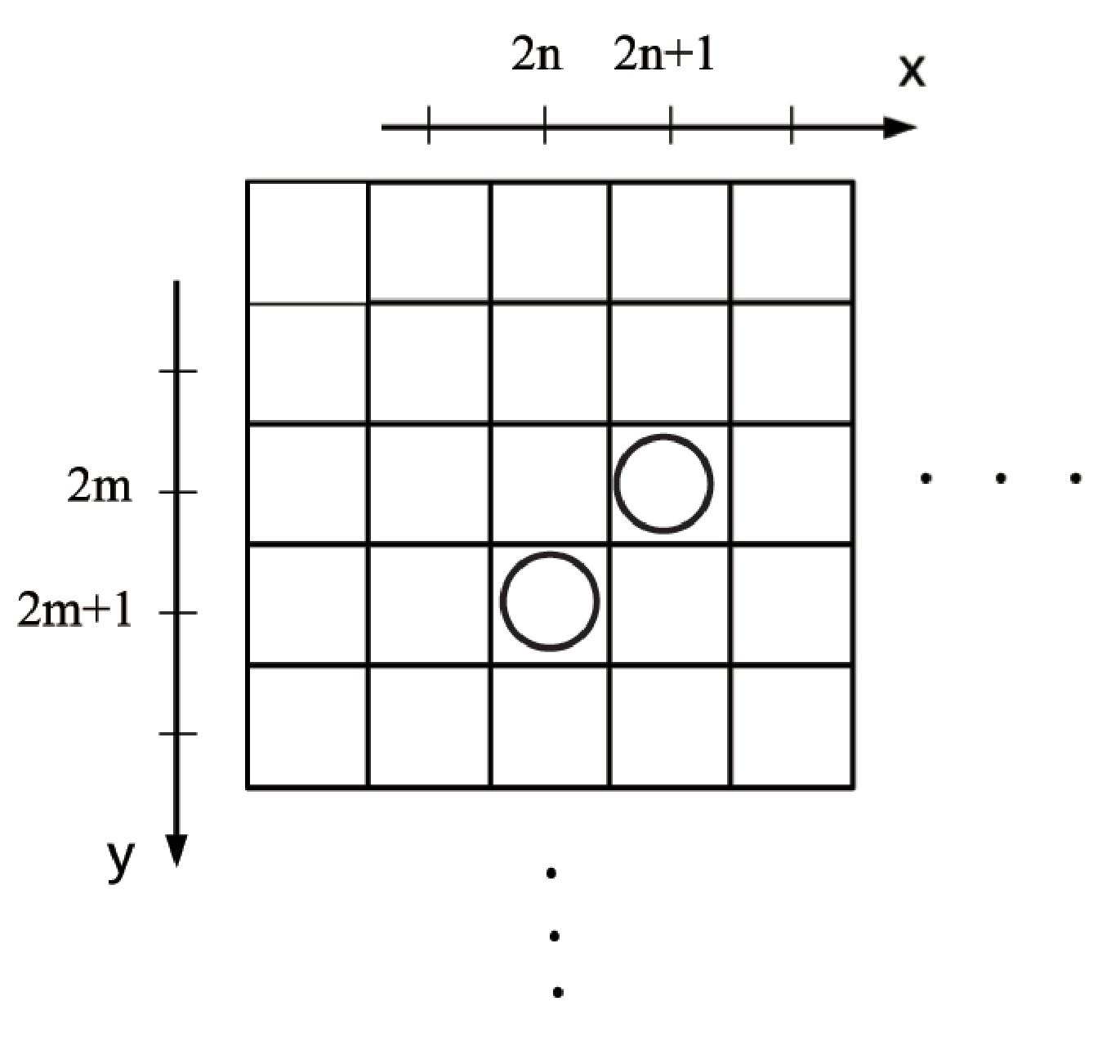}
\caption{An example of elements of $\mathcal{N}_0$}
\label{n0examplep2}	
\end{minipage}
\end{figure}

\begin{theorem}\label{ppositionofrook}
The set of $\mathcal{P}$-positions for the game of Definition \ref{tworookjump} is
$\mathcal{P}$ in Definition \ref{ppositionset}.
\end{theorem}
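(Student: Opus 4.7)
The plan is to verify the claim by the standard three-property characterisation of $\mathcal{P}$-positions: (i) every terminal position lies in $\mathcal{P}$; (ii) every legal option from a $\mathcal{P}$-position lies in $\mathcal{N}$; (iii) every position in $\mathcal{N}$ admits at least one legal option in $\mathcal{P}$. The underlying intuition is that, were it not for the ``no two rooks on the same square'' rule, the four coordinates $x,y,z,w$ would behave exactly like four independent Nim piles, and the $\mathcal{P}$-positions would be exactly $\{x\oplus y\oplus z\oplus w=0\}$. The sets $\mathcal{P}_1$ and $\mathcal{N}_0$ are precisely the corrections forced by the coincidence rule: $\mathcal{N}_0$ lists the XOR-zero positions whose ``obvious'' Nim-defence is blocked because the target square is occupied, and $\mathcal{P}_1$ lists the aligned-and-adjacent positions that the player in trouble is pushed into instead.

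For (i), I would directly inspect the four elements of $\mathcal{E}$. Each is of the form $(0,0,1,0)$, $(0,0,0,1)$, $(1,0,0,0)$, or $(0,1,0,0)$, and a one-line check places each in one of the four families defining $\mathcal{P}_1$ while showing it is not in $\mathcal{N}_0$, hence in $\mathcal{P}$. For (ii), I would split into the two subcases $(x,y,z,w)\in P_0\setminus(\mathcal{P}_1\cup\mathcal{N}_0)$ and $(x,y,z,w)\in\mathcal{P}_1\setminus\mathcal{N}_0$. In the first subcase, any move decreases a single coordinate and so toggles the XOR to a nonzero value, throwing us out of $P_0$; a short parity check on the remaining three coordinates rules out the new position landing in $\mathcal{P}_1$, using the hypothesis that we were not in $\mathcal{N}_0$. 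In the second subcase, the two rooks are aligned and adjacent in a row or column with matched parity, and I would enumerate the four $\mathcal{P}_1$ families and verify that no single legal move takes one family into another; the resulting XOR is generically nonzero, and whenever it happens to vanish one verifies that the image lies in $\mathcal{N}_0$.

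For (iii), I would first handle a position with $x\oplus y\oplus z\oplus w\neq 0$ that is not in $\mathcal{P}_1$ by attempting the standard Nim move that zeroes the XOR. If the Nim target avoids $\mathcal{N}_0$ we are done; otherwise one substitutes a move that aligns the two rooks to realise a $\mathcal{P}_1$-position in the same row or column, using the jump rule when necessary to bypass the other rook. For a position already in $\mathcal{N}_0$, I would exhibit the alignment move explicitly: e.g.\ from $(2n,2m,2n+1,2m+1)$ slide the rook at $(2n+1,2m+1)$ horizontally to $(2n,2m+1)$, landing in $(2n,2m,2n,2m+1)\in\mathcal{P}_1$. Analogous explicit moves handle the other three $\mathcal{N}_0$ families by symmetry.

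The main obstacle is the coupling between the Nim analysis and the geometric ``no-coincidence'' constraint in Step (iii): when the canonical Nim move is blocked, one must know that a substitute move into $\mathcal{P}_1$ is always available, and this requires a careful binary-digit case analysis on the source position's coordinates (essentially tracking which coordinate is being equalised and why the alternative alignment can be achieved). The $\mathcal{P}_1$-to-$\mathcal{P}_1$ transition check in Step (ii) is similarly tedious but purely mechanical. Since one of the present authors has carried out the full enumeration in \cite{journalofinf} for the mathematically equivalent rook game, the remainder of the argument can be completed by citing that source after the alignments above are spelled out.
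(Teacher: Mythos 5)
The paper contains no proof of this theorem at all: the statement and the sets $\mathcal{P}_1$, $\mathcal{N}_0$ are imported verbatim from \cite{journalofinf}, and the only in-paper justification is that citation. Your outline of the standard three-condition verification is sound and its concrete claims check out. For step (ii): if a position has zero XOR and a horizontal move lands in $\mathcal{P}_1$, then either the rooks shared a row (forcing them onto one square, impossible) or the columns and the rows each XOR to $1$, which is exactly membership in $\mathcal{N}_0$; and from a $\mathcal{P}_1$ position every move either destroys alignment with nonzero XOR or lands in $\mathcal{N}_0$, as you say. Your explicit move out of $(2n,2m,2n+1,2m+1)$ to $(2n,2m,2n,2m+1)$ is legal and lies in $\mathcal{P}_1\setminus\mathcal{N}_0$. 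The genuinely delicate point, which you correctly identify but do not carry out, is step (iii) for XOR-nonzero positions not in $\mathcal{P}_1$: the useful observation is that a candidate Nim move on a column coordinate lands on the other rook exactly when $y=w$ and lands in $\mathcal{N}_0$ exactly when $y\oplus w=1$ (and symmetrically for rows), so when every candidate fails the two rooks are forced to be aligned or adjacent with matching parity in one coordinate pair, and the substitute move onto the other rook's row/column (or to the adjacent cell of correct parity, jumping over if needed) always reaches $\mathcal{P}_1$. You defer exactly this case analysis to \cite{journalofinf}, which is precisely what the paper does, so your proposal is no less complete than the paper's treatment. One minor imprecision: the jump rule never unblocks the canonical Nim move (the obstruction is landing \emph{on} the other rook, not passing over it); it is only needed for the substitute alignment move.
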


\begin{lemma}\label{samemath}
The game of Definition \ref{twocoinnopush} is mathematically the same as the game of Definition \ref{tworookjump}.
\end{lemma}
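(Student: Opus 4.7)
The plan is to exhibit an explicit game isomorphism $\phi$ between the two rulesets and observe that all Grundy-theoretic data transports along it. The natural candidate is the coordinate shift
\[
\phi(w, x, y, z) = (w-1,\, x-1,\, y-1,\, z-1),
\]
regarded as a map from $\mathbb{N}^4$ (the reachable Silver Dollar positions in Definition \ref{twocoinnopush}, where every coordinate is at least $1$ since the playable board is $\{1,2,\ldots\}^2$ and the removal of the push rule forbids any coin from leaving the board) onto $\mathbb{Z}_{\geq 0}^4$ (the rook position space of Definition \ref{tworookjump}). This is plainly a bijection.

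The central step is to verify that $\phi$ carries the legal-move relation isomorphically in both directions. A leftward Silver Dollar move sends a coin from $(w, x)$ to some $(j, x)$ with $1 \leq j < w$, subject only to $(j, x)$ not coinciding with the other coin; this is precisely the image under $\phi$ of moving the corresponding rook from $(w-1, x-1)$ to $(j-1, x-1)$ with $0 \leq j-1 < w-1$, with the analogous non-coincidence constraint. Upward moves are handled symmetrically. The shared rule ``jump over but never land on the other piece'' transfers verbatim because $\phi$ preserves the relation of two pieces sharing a square. A short case analysis then identifies the Silver Dollar terminal positions (both coins simultaneously immobile) as $(1,1,1,2)$, $(1,1,2,1)$, $(1,2,1,1)$, and $(2,1,1,1)$, which $\phi$ sends bijectively to the set $\mathcal{E}$ of terminals of Corner the Two Rooks. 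Hence $\phi$ is an isomorphism of impartial combinatorial games, and every game-theoretic invariant -- $\mathcal{P}$-positions, $\mathcal{N}$-positions, Grundy values -- transports automatically.

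The only genuine subtlety I foresee is bookkeeping around the indexing convention: one must make explicit that, without the push rule, there is no legal ``sacrifice'' move that takes a coin off the board. Otherwise a lone coin at $(1,1)$ would acquire an extra option and hence Grundy value $1$, which would destroy the correspondence with the genuinely terminal rook position $(0,0)$. Pinning down this single convention -- that coins in Definition \ref{twocoinnopush} can only be moved to on-board squares with both coordinates at least $1$ -- makes the map $\phi$ a clean bijection of move graphs, after which the rest of the proof is routine.
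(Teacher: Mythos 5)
Your coordinate shift $\phi(w,x,y,z)=(w-1,x-1,y-1,z-1)$ is the right dictionary between the two boards, and your matching of the on-board moves, the jump/landing rule, and the terminal set $\mathcal{E}$ is the routine part of the argument. The gap is in the premise you use to make $\phi$ a bijection of move graphs: you assert that in the game of Definition \ref{twocoinnopush} no coin can leave the board because the push rule has been removed. That is not what the rules say. Definition \ref{twocoinnopush} is obtained from Definition \ref{twocoinwithpush} only by deleting the push option and permitting jumps; the clause ``if a coin is dropped off the board, players cannot use this coin'' is inherited, so from every position each coin retains the extra option of being moved off the board. The push rule concerns shoving the \emph{other} coin and has no bearing on whether a coin may exit the board. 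Consequently the two move graphs are not isomorphic as you describe them: for instance $(1,1,1,2)$ is not terminal in the coin game, since either coin may still be dropped, and the lone-coin positions you worried about genuinely have the extra option.

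This is not a bookkeeping convention to be ``pinned down''; it is the entire content of the lemma, and it is exactly what the paper's own (brief) proof addresses. The paper's argument is that dropping a coin off the board while the other coin remains on it is always a losing move: the opponent simply drops the second coin as well, leaving you with no legal move. Hence every drop move from a position with both coins on the board leads to an $\mathcal{N}$-position, and an easy induction then shows that deleting these always-bad options does not change the outcome class of any position with both coins on the board. Only after that reduction does your isomorphism $\phi$ identify the coin game with Corner the Two Rooks. To repair your proof, replace the assumption that drops are illegal with this argument (equivalently, show that every position with exactly one coin on the board is an $\mathcal{N}$-position because the last coin can always be sacrificed), and then apply $\phi$ to the drop-free game.
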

\begin{proof}
In the game of  Definition \ref{twocoinnopush}, moving a coin outside of the chessboard while another is on the chessboard inevitably leads to losing the game.
Therefore, the set of $\mathcal{P}$-positions remains the same even if going out of the chessboard is illegal.
Therefore, two games are mathematically the same.
\end{proof}

By Theorem \ref{ppositionofrook} and Lemma \ref{samemath}, we prove Theorem \ref{gamewithoutpush}.

\end{document}